\newcounter{constant}
\newcommand{\newconstant}[1]{\refstepcounter{constant}\label{#1}}
\newcommand{\useconstant}[1]{c_{\textnormal{\tiny \ref{#1}}}}
\newcounter{bigconstant}
\newtheorem{teo}{Theorem}[section]
\newtheorem{lemma}[teo]{Lemma}
\numberwithin{equation}{section} 
\theoremstyle{definition}
\newtheorem{defn}[teo]{Definition}
\newtheorem{remark}[teo]{Remark}
\renewcommand{\P}{\mathbb{P}}
\newcommand{\R}{\mathbb{R}}
\newcommand{\N}{\mathbb{N}}
\newcommand{\Z}{\mathbb{Z}}
\newcommand{\charf}[1]{\mathbf{1}_{#1}}
\DeclareMathOperator{\poisson}{Poisson}
\title{Local and global survival for infections with recovery}
\author{Rangel Baldasso\footnote{E-mail: \ r.baldasso@math.leidenuniv.nl; \ Mathematical Institute, Leiden University, P.O. Box 9512, 2300 RA Leiden, The Netherlands.} 
\and Alexandre Stauffer\footnote{E-mail: \ astauffer@mat.uniroma3.it; \ Universit\`a Roma Tre, Dip.\ di Matematica e Fisica, Largo S.\ Murialdo 1, 00146, Rome, Italy; University of Bath, Dept of Mathematical Sciences, BA2 7AY Bath, UK, supported by EPSRC Fellowship EP/N004566/1.}}
\begin{document}

\maketitle

\begin{abstract}
  We establish two open problems from Kesten and Sidoravicius~\cite{ks}.
  Particles are initially placed on $\Z^{d}$ with a given density and evolve as independent continuous-time random walks.
  Particles initially placed at the origin are declared as infected. 
  Infection transmits instantaneously to healthy particles on the same site and infected particles become healthy with a positive rate.
  We prove that, for small enough recovery rates, the infection process survives and visits the origin infinitely many times on the event of survival.
  Second, we establish the existence of density parameters for which the infection survives for all choices of the recovery rate.
\end{abstract}

\section{Introduction}
In this note, we consider infection processes with recovery that evolve on the $d$-dimensional lattice where each individual particle performs an independent continuous-time simple random walk. 
Our goal is to settle two open problems by Kesten and Sidoravicius~\cite{ks}.

We start by defining the model precisely. 
Fix $\rho>0$ and let $(\eta_{0}(x))_{x \in \Z^{d}}$ be an i.i.d.\ collection of random variables with distribution $\poisson(\rho)$. Place, at time zero, $\eta_{0}(x)$ particles at the site $x \in \Z^{d}$ and let these particles evolve as independent continuous-time nearest-neighbor random walks. We call $\rho$ the density of the process and denote by $\eta_{t}(x)$ the number of particles at site $x$ at time $t$.

Now place at time zero an additional, infected particle at the origin (this is so that the process in nontrivial) 
and declare all other particles at the origin as infected. 
Particles starting outside the origin are declared healthy. 
Healthy particles become immediately infected when they share a site with an infected particle. 
Furthermore, infected particles become healthy with rate $\lambda>0$. 
The parameter $\lambda$ is called the recovery rate. 
Let $\P_{\rho}^{\lambda}$ denote the distribution of the process when the initial density is $\rho$ and the recovery rate is $\lambda$.
Due to the instantaneous infection mechanism, an infected particle can only become healthy if it is the only particle in the site. 

\begin{remark}\label{remark:difference_ks}
   In the model considered in~\cite{ks}, infection does not happen instantaneously. Instead, it occurs only when there is a jump from an infected particle to a site with healthy particles or, the other way around, 
   when a healthy particle jumps into a site with infected particles. In this case, the main change is that sites can have both infected and healthy particles at the same time.
\end{remark}

We say that the \emph{infection survives} if, for every positive time $t$, there exist at least one infected particle. 
We say that the \emph{infection survives locally} if the set of times the origin contains an infected particle is unbounded. 
Finally, we say that the \emph{infection dies out} if there exists a positive (random) time $t$ such that all particles are healthy at time $t$.

We study survivability and local survivability of this infection processes in the perturbative regimes, where one of the parameters $\rho$ or $\lambda$ is fixed and the other is taken either large or small enough. In~\cite{ks}, the authors prove that, for any given $\rho>0$, the infection survives with positive probability if $\lambda$ is taken small enough.

According to~\cite[Remark 2]{ks}, their proof of survival for small recovery rates does not give any information about the behavior of the infected particles. Our first result goes in this direction: we prove that, provided $\lambda$ is taken small enough, local survival occurs almost surely, conditioned on survival of the infection.

\begin{teo}\label{t:local_survival}
   For any density $\rho>0$, if the recovery rate $\lambda>0$ is small enough then 
   \begin{equation}\label{eq:local_survival}
   \P_{\rho}^{\lambda}\Big[\text{ the infection survives locally } \Big| \text{the infection survives } \Big] = 1.
   \end{equation}
\end{teo}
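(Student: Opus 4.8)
The plan is to reduce local survival to a renewal-type argument that exploits the translation invariance of the particle environment together with the ergodicity of the infection dynamics in space. First I would observe that conditioning on survival of the infection is delicate because survival is not a local event; so the starting point should be the result of Kesten–Sidoravicius that, for $\lambda$ small, survival occurs with positive probability, together with the stronger quantitative input their proof actually provides (or that one can extract from it): with positive probability the infection not only survives but spreads linearly, occupying a growing region. The key is to upgrade this to a statement that the infection, once it has survived up to a large time, is ``robust'' in a region of linear size, so that it carries with it a uniformly positive probability of re-igniting any fixed site it sweeps over.

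The main steps, in order, would be the following. (1) Set up a coupling/monotonicity framework: although the infection process is not monotone in the usual sense, one can compare it from below with a truncated process that only keeps infection in a bounded window, or use the standard trick of running the infection from a single "seed'' and using independence of disjoint space-time regions of the random-walk environment. (2) Prove a \emph{restart lemma}: there is $p_0>0$ such that, whenever the infection is present at some site $y$ at some time $s$, then with probability at least $p_0$ (depending only on $\rho,\lambda$, not on $y,s$, and measurable with respect to the environment after time $s$ in a bounded space-time box around $(y,s)$) the infection later returns to $y$; by translation invariance it suffices to prove this for $y=0$, and this is where the choice of small $\lambda$ enters, guaranteeing that a single infected particle at a site typically infects a linearly growing cloud before all traces recover. (3) Use the linear-growth / shape-type estimate (again for $\lambda$ small, from~\cite{ks} or a strengthening of it) to show that on the event of survival the infection almost surely visits sites arbitrarily far from the origin, hence in particular re-enters any fixed finite box infinitely often in \emph{space}; combine this with step (2) and a Borel–Cantelli argument along a sequence of disjoint space-time boxes to conclude that the origin is re-infected infinitely often in \emph{time}, almost surely on survival.

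The hard part will be step (2), the restart lemma with a uniform lower bound. The difficulty is twofold: first, the infection process has infinite range of dependence through the moving particles, so one must carefully localize — typically by revealing the environment in stages and using that, up to an error that can be made small, the relevant dynamics in a box of side $\ell$ over a time interval of length $\ell$ depends only on particles starting within distance $O(\ell)$; second, one needs that a \emph{single} infected particle has a non-negligible chance of generating enough secondary infections to survive long enough to diffuse back to a prescribed site, and this must hold uniformly, which forces the small-$\lambda$ hypothesis to be used quantitatively (e.g. the expected number of infections produced before recovery is large, via a comparison with a supercritical branching-type structure on a coarse-grained lattice, as in the Kesten–Sidoravicius multiscale scheme). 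I would expect to recycle the multiscale/renormalization construction of~\cite{ks} as a black box to get both the linear spreading of step (3) and the seed-survival probability needed in step (2), and then the remaining work is the (routine but careful) independence bookkeeping needed to string together infinitely many re-infection attempts.
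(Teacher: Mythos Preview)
Your outline has a real gap at the junction of steps (2) and (3), and step (3) itself rests on an input that is not available.

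First, the input you invoke in step (3) does not exist. You write that you would use ``the linear-growth / shape-type estimate (again for $\lambda$ small, from~\cite{ks} or a strengthening of it)''. But \cite{ks} proves \emph{only} survival in the recovery case; as the paper notes (citing \cite[Remark~2]{ks}), their argument gives no information on where infected particles are. The shape results of \cite{ks2,ks3} are for $\lambda=0$. So the ``strengthening'' you allude to is not a routine extraction --- it is essentially the content of the theorem you are trying to prove.

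Second, even if you had linear spread, the inference ``visits sites arbitrarily far from the origin, hence re-enters any fixed finite box infinitely often'' is a non sequitur: outward growth of the infected set says nothing about repeated returns to $0$ when particles can recover. And your restart lemma, as you state it, gives a uniform chance of returning to the \emph{same} site $y$, not to the origin; applying it at $y=0$ only tells you that each visit to $0$ is followed, with probability $\ge p_0$, by another visit. That yields positive probability of infinitely many visits, but not probability $1$ conditional on survival: nothing in your scheme rules out the event ``survive forever while drifting away and never returning to $0$''. To kill that event you would need a lower bound, uniform over all surviving configurations (with infected particles possibly very far from $0$), on the probability of reaching $0$ --- which you cannot have.

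The paper avoids this circularity by a geometric construction rather than a renewal argument. Using the Lipschitz surface of \cite{gs2} for a suitable local ``good/acceptable'' event, it produces a random space--time surface $\mathcal{S}$ of cells on which the infection propagates deterministically once it enters. Two features of $\mathcal{S}$ do the work your argument is missing: (i) $\mathcal{S}$ surrounds the origin, so on survival the infection must cross into $\mathcal{S}$; and (ii) $\mathcal{S}$ contains infinitely many cells with all spatial coordinates equal to $0$, reachable from any entry cell by a path in $\mathcal{S}$ that is monotone in time. Thus the surface itself is the mechanism that routes the infection back to the origin infinitely often --- precisely the ingredient your Borel--Cantelli scheme lacks.
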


\begin{remark}
   The theorem above is stated for the case of instantaneous infection, but the proof also works for the case when infection happens only at jump times, as defined in Remark~\ref{remark:difference_ks}.
\end{remark}

\bigskip

The second result of this paper regards~\cite[Page 548]{ks} and is related to the absence of a phase transition for large values of the density parameter $\rho$. We prove that, provided $\rho$ is large enough, the infection survives with positive probability for all recovery rates $\lambda$.

%
%
\begin{teo}\label{t:survival}
   There exists a density $\rho_{+}>0$ such that, for all $\rho \geq \rho_{+}$ and all $\lambda\in(0,\infty]$,
   \begin{equation}
      \P_{\rho}^{\lambda}\Big[ \text{ the infection survives } \Big]>0.
   \end{equation}
\end{teo}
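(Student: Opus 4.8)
The plan is to reduce everything to the extreme case $\lambda=\infty$ and then run a block (renormalisation) argument, exploiting that a large density makes the particle cloud locally so dense that an infection which has already ``spread out'' cannot vanish. First I would reduce to $\lambda=\infty$: under the natural coupling in which all processes use the same realisation of the random walks, the set of infected particles is monotone non-increasing in $\lambda$. Indeed, by induction on infection times, if a particle is infected in the process with instantaneous recovery, then the particle that infected it was also infected in the process with recovery rate $\lambda<\infty$, so it becomes infected there too and can only recover later. Hence it suffices to show $\P_{\rho}^{\infty}[\text{the infection survives}]>0$ for $\rho$ large. In the $\lambda=\infty$ process an infected particle recovers the instant it is the unique particle at its site, so every infected particle always shares its site with at least one other particle; call a site \emph{populated} if it contains at least two particles. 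Recall that for every fixed $t$ the field $(\eta_{t}(x))_{x\in\Z^{d}}$ is again i.i.d.\ $\poisson(\rho)$, so a given site is populated at a given time with probability $1-(1+\rho)e^{-\rho}$, which tends to $1$ as $\rho\to\infty$.

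Next, the one-block estimate. Fix once and for all a spatial scale $L$ and a time scale $T$; these are chosen as absolute constants only at the very end, so that the percolation comparison below is supercritical, and only afterwards is $\rho$ taken large. Say that the infection is present in \emph{canonical form} in a region if the infected set contains all particles of some populated site in that region. The block estimate asserts: if the infection is present in canonical form at the base of a space--time box of side $L$ and duration $T$, then with probability at least $1-\epsilon(\rho)$, where $\epsilon(\rho)\to 0$ as $\rho\to\infty$, the infection is present in canonical form at the base of each spatially neighbouring box at the next time level, and in particular it does not die inside the box. The mechanism: with $\sim\rho L^{d}$ particles present, for $\rho$ large every neighbour of an occupied site is populated with probability $1-O(e^{-c\rho})$, so an infected clump, each time it breaks up, lands on a populated site and is never isolated; using the fresh particles it meets it also transports and amplifies the infection so as to produce a populated infected site in the corner of every neighbouring box before time $T$. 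The number of ``dangerous'' events inside one box --- break-ups of an infected clump that could fail to land on a populated site --- is at most polynomial in $\rho,L,T$, and each fails with probability at most $e^{-c\rho}$, so a union bound (together with a deviation bound on the total number of jumps in the box) gives the claimed $\epsilon(\rho)$.

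Finally, the renormalisation. After truncating particle trajectories at displacement $KT$, at a probability cost $e^{-cK^{2}T}$ that is absorbed into $\epsilon(\rho)$, the events ``box is good'' have short-range dependence, so by the Liggett--Schonmann--Stacey domination theorem the good boxes stochastically dominate, for $\rho$ large, a Bernoulli oriented percolation on the renormalised space--time lattice with parameter arbitrarily close to $1$. Choosing $L,T$ so that this percolation is supercritical, and then $\rho$ large enough that each box is good with the required probability, the origin's forward cluster is infinite with positive probability; on that event the infection is present in canonical form in boxes reaching arbitrarily far in time, hence it survives. Combined with the reduction to $\lambda=\infty$, this gives $\P_{\rho}^{\lambda}[\text{the infection survives}]>0$ for all $\lambda\in(0,\infty]$ and all $\rho\geq\rho_{+}$, with $\rho_{+}$ the threshold so obtained.

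I expect the one-block estimate to be the main obstacle. The delicate points are: (i) an infected clump walking through the box must never be isolated, which requires essentially every site it visits to be populated, but the clump's trajectory is correlated with the environment it samples, so one must reveal the environment sequentially ``ahead of'' the clump and use the Poisson-invariance together with the strong Markov property to keep the unexplored part fresh; (ii) ``canonical form'' must be genuinely self-reproducing, i.e.\ one must show the infection not only persists but spreads so as to populate the corner of every neighbouring box, which is precisely where the large density is used in an essential way; and (iii) arranging honest short-range dependence between renormalised boxes despite unbounded particle displacements. Point (i) is the technical heart of the argument.
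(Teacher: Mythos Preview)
Your reduction to $\lambda=\infty$ and the overall block/percolation strategy are correct and match the paper's approach. The substantive difference is in the choice of the local event, and here the paper is considerably simpler than what you propose.

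You track the infected clump dynamically and worry about it becoming isolated as it moves, which forces you into the sequential-revealing argument you flag as point~(i). The paper sidesteps this entirely by choosing a local event that is a property of the \emph{environment alone}, with no reference to which particle is infected. Concretely, for integer times~$k$ and sites~$x$, let $N_{0}^{k}(x)$ be the number of particles at~$x$ at time~$k$ that do not jump during $[k,k+1]$, and $N_{u}^{k}(x)$ the number that jump first to $x+u$. By Poisson thinning these are independent Poissons with means $e^{-1}\rho$ and $\frac{1-e^{-1}}{2d}\rho$. Call $(x,k)$ \emph{good} if $N_{0}^{k}(x)\ge 2$ and $N_{u}^{k}(x)\ge 1$ for every unit direction~$u$, and let the block event $\tilde E$ be that every $(x,k)$ in the super-cell is good. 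On $\tilde E$, an infection present at any site at an integer time is never isolated (two particles stay put) and deterministically spreads to every neighbour by the next integer time; there is no clump to follow and no environment to reveal. The probability that a single $(x,k)$ fails is $O(\rho e^{-c\rho})$, so a union bound over the polynomially many $(x,k)$ in a block gives $\P[\tilde E^{c}]\to 0$ as $\rho\to\infty$, for fixed block dimensions. This replaces your entire one-block estimate and dissolves points~(i) and~(ii).

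On the percolation side, the paper uses the Lipschitz-surface machinery of Gracar--Stauffer (Theorem~\ref{t:lipschitz}) rather than LSS domination plus oriented percolation. Either tool handles the long-range dependence coming from unbounded particle displacements (your point~(iii)); the Lipschitz-surface theorem packages the sprinkling/decoupling internally, so the paper does not need to truncate trajectories by hand. Your LSS route would also work, but would require you to carry out the truncation and verify finite-range dependence explicitly.
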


\begin{remark}
   The theorem above does not hold from the model considered in~\cite{ks}. Theorem~\ref{t:survival} relies on the fact that particles can only heal when they are alone in a site, which happens rarely when the density is large enough. If on the other hand we consider the case when sites can have both infected and healthy particles, the conclusions of the theorem above do not carry over, since it might be that, provided the healing rate is large enough, particles heal fast enough as to prevent the survival of the infection.
\end{remark}

As proved in Section~\ref{subsec:model}, the collection of infected particles in the process with $\lambda=\infty$ (that is, infected particles recover instantaneously whenever they are alone at a site) 
is stochastically dominated by the set of infected particles in the process with $\lambda<\infty$. 
Using this, we will restrict our proof of Theorem~\ref{t:survival} to the case $\lambda=\infty$.

\begin{remark}
   The absence of a phase transition for large densities, as shown in Theorem~\ref{t:survival}, does not occur 
   in the model when infection spreads only when particles jump, as defined in Remark~\ref{remark:difference_ks}. For that model,~\cite{ks} in fact shows that a phase transition on $\lambda$ occurs for any $\rho$. 
\end{remark}

\bigskip

\noindent\textbf{Overview of the proofs.} 
The proofs of Theorems~\ref{t:local_survival} and~\ref{t:survival} are based on a multi-scale framework developed by Gracar and Stauffer in~\cite{gs2}, and provide a novel application of this technique. 
Given a local monotone event, the results in~\cite{gs2} provide the existence of a \emph{Lipschitz surface} (see Definition~\ref{def:lipschitz_surface}) formed by boxes where translations of this event occur.
To show that the infection survives locally and establish Theorem~\ref{t:local_survival}, 
we consider a local event which guarantees that if the infection reaches a space-time box where this event holds, then the infection not only survives for a long time but 
also spreads to nearby boxes. A (by now) standard way of establishing survival of the infection is to show that there exists a ``directed percolating structure'' of such events. 
To show \emph{local} survival of the infection, this is not enough. For this, we use the geometrical structure provided by the Lipschitz surface, 
in particular the fact that the Lipschitz surface has infinitely many cells (in space-time) close to the origin of $\mathbb{Z}^d$,
which gives that by ``navigating through the surface'' the infection visits the origin infinitely often.

We remark that, for Theorem~\ref{t:survival}, our proof also provides some information about the position of infected particles. 
For example, we obtain that, on the event of survival, the infection travels with positive speed. However, due to technical limitations of the Lipschitz surface approach, this would be obtained only
for $d \geq 2$. To obtain a similar statement for the unidimensional case, a possible approach would be to use the renormalization developed by Baldasso and Teixeira in~\cite{bt2}.
But as positive speed is not the main purpose of this work (and there are other proofs of positive speed~\cite{ks,ks2,ks3,gs1}), we refrain from pursuing this direction.

\bigskip

\noindent\textbf{Related works.} The work~\cite{ks} is part of a collection of papers by the same authors that includes~\cite{ks2, ks3}. Only~\cite{ks} considers the case of infection with recovery.~\cite{ks2} provides lower 
and upper bounds for the speed of infection spread, while~\cite{ks3} strengthens these bounds to a full shape theorem. 
Following the same line, Baldasso and Stauffer~\cite{bs} consider infection processes spreading on top of \emph{biased} random walks, 
and prove the existence of a phase transition for local survival of the infection. 

When particles do not move independently, Baldasso and Teixeira~\cite{bt} consider the case where particles perform a one-dimensional zero-range process. They provide, under weak conditions on the jump rates, lower and upper bounds for the speed of the infection front. Infection processes on top of the one-dimensional exclusion process were considered by Jara, Moreno, and Ram\'{i}rez~\cite{jmr}, where the authors use regeneration arguments to conclude a law of large numbers and central limit theorem for the infection front.

Gracar and Stauffer develop the Lipschitz surface approach in~\cite{gs2, gs1}. The existence of a Lipschitz surface (see Theorem~\ref{t:lipschitz} in the current paper) was proved in~\cite{gs2}, while~\cite{gs1} used this structure to study infection spread on top of the random conductance model. They consider both infections with and without recovery. In the case without recovery, the infection travels with positive speed in any direction. When a recovery mechanism is introduced, they prove that the infection survives with positive probability, provided the recovery rate is small enough.
We remark however that, even though the results in~\cite{gs2} are stated only for dimensions $d \geq 2$, they still remain valid for the unidimensional case.

%
%

\section{Preliminaries}\label{sec:preliminaries}
This section contains the precise definition of the model and also the summary of the main results we will need from~\cite{gs2}. 
We split our discussion in two subsections. 
In the first, we define the infection process we consider and present some useful constructions of it, while the latter contains the statement of the theorem regarding the existence of Lipschitz surfaces.

\subsection{The model}\label{subsec:model}
For each $x \in \Z^{d}$ and $n \in \N$, let $(S^{x,n}_{t})_{t \geq 0}$ be an independent continuous-time simple random walk on $\Z^{d}$ with $S^{x,n}_{0}=x$. Given an initial condition $\eta_{0}:\Z^{d} \to \Z_{+}$, define the process $(\eta_{t})_{t \geq 0}$ by setting, for each time $t \geq 0$ and site $x \in \Z^{d}$,
\begin{equation}
   \eta_{t}(x) = \sum\nolimits_{y \in \Z^{d}} \sum\nolimits_{n \leq \eta_{0}(y)} \charf{\{S^{y,n}_{t}=x\}}.
\end{equation}

For each $\rho>0$, let $\P_{\rho}$ denote the distribution of the process above when $\eta_{0}$ is composed of i.i.d.\ $\poisson(\rho)$ random variables which are independent of the collection $(S^{x,n})_{x \in \Z^{d}, n \in \N}$. For each $\rho>0$, this distribution is invariant for the process $(\eta_{t})_{t \geq 0}$ and the parameter $\rho$ is called the density of the process.

Whenever we have two initial conditions $\eta_{0}$ and $\tilde{\eta}_{0}$ such that $\eta_{0} \preceq \tilde{\eta}_{0}$\footnote{Given two configurations $\eta$ and $\tilde{\eta}$, we say that $\eta \preceq \tilde{\eta}$ if $\eta(x) \leq \tilde{\eta}(x)$, for all $x \in \Z^{d}$.}, we can construct a coupling between the processes $(\eta_{t})_{t \geq 0}$ and $(\tilde{\eta}_{t})_{t \geq 0}$ in a way that $\eta_{t} \preceq \tilde{\eta}_{t}$, for all $t \geq 0$: one simply uses the same collection of paths $(S^{x,n})_{x \in \Z^{d}, n \in \N}$ to construct the evolution. In particular, it is possible to couple processes $(\eta^{\rho}_{t})_{t \geq 0}$ with different densities in a way that $\eta_{t}^{\rho} \preceq \eta_{t}^{\rho'}$, for all $t \geq 0$, if $\rho \leq \rho'$.

\bigskip

\noindent\textbf{The process with positive recovery rate.} Let us now define the infection process with recovery rate $\lambda>0$. 
To this end, we introduce an independent collection of Poisson point processes $(R^{x,n}_{\lambda})_{x \in \Z^{d}, n \in \N}$ 
on $\R_{+}$ with intensity $\lambda$. The process $R^{x,n}_{\lambda}$ is thought of as the recovery marks of the walk $(S^{x,n}_{t})_{t \geq 0}$.

At time zero, place an additional particle at the origin, declare all particles at the origin as infected, and the remaining particles as healthy. Healthy particles get infected when they share a site with an already infected particle. An infected particle $(y,n)$ recovers at time $t$ if $t \in R^{y,n}_{\lambda}$ 
and the particle is alone at that time (that is, $\eta_t(S_t^{y,n})=1$).

Let $\xi_{t}(x)$ denote the number of infected particles at site $x$ at time $t$, and notice that $\xi_{t}(x) \in \{0, \eta_{t}(x)\}$, i.e., for all times $t \geq 0$ and sites $x \in \Z^{d}$, either all particles in $x$ are healthy or all are infected.

As already noted in the introduction, we denote by $\P_{\rho}^{\lambda}$ the distribution of the process $(\eta_{t})_{t \geq 0}$ with initial density $\rho$ together with the recovery processes $(R^{x,n}_{\lambda})_{x \in \Z^{d}, n \in \N}$.

\bigskip

\noindent\textbf{Instantaneous recovery.} Finally, we define the process with instantaneous recovery, whose distribution we denote by $\P_{\rho}^{
\infty}$. The infection mechanism is the same as in the previous case. However, here an infected particle is immediately declared healthy as soon as it is alone in a site.

Once again, by using the same collections of paths for the random walks, this provides a coupling between infection processes with recovery rate $\lambda>0$ and with instantaneous recovery in a way that the number of infected particles in the 
latter is always smaller than in the former. 

\subsection{The Lipschitz surface}
We recall the Lipschitz surface structure introduced in~\cite{gs2}. In order to do so, we briefly introduce some notation from~\cite{gs2} and provide the necessary adaptations to our setting.

\bigskip

\noindent\textbf{Tessellation.} We first tessellate the space $\Z^{d}$ into boxes of side length $\ell$ and time into intervals of length $\beta$, where $\ell$ and $\beta$ are positive integers 
that will be chosen appropriately later on. 
For each $i \in \Z^{d}$ and $\tau \in \Z_{+}$, consider the box $i$ as $i \ell + [0, \ell]^{d}$ and the interval $\tau$ as $[\tau \beta, (\tau+1) \beta]$. 
We refer to the space-time box $\Big( i \ell + [0, \ell]^{d} \Big) \times [\tau \beta, (\tau+1) \beta]$ by the \emph{cell} $(i, \tau)$.

Second, we introduce three types of overlapping boxes. 
For a positive integer $\eta>0$, we consider 
the \emph{super box $i$} as $i \ell +[-\eta \ell, (\eta+1)\ell]^{d}$ and the \emph{super interval $\tau$} as $[\tau \beta, (\tau+\eta) \beta]$.
A \emph{super cell} $(i, \tau)$ is the set $\Big( i \ell +[-\eta \ell, (\eta+1)\ell]^{d} \Big) \times [\tau \beta, (\tau+\eta) \beta]$.
Finally, we define the \emph{extended box} $i$ as $i \ell + \big[-\frac{\ell}{3}, \ell+\frac{\ell}{3} \big]^{d}$ and the \emph{quasi-super box} $i$ as $i \ell + \big[-\frac{\eta \ell}{2}, \ell+\frac{\eta \ell}{2} \big]^{d}$.

For a set $X' \subset \Z^{d}$, we say that a particle $(S^{x,n}_{t})_{t \geq 0}$ has \emph{displacement} in $X'$ in the time interval $[t_{0}, t_{0}+t_{1}]$ if $S^{x,n}_{t}-S^{x,n}_{t_{0}} \in X'$, for all $t \in [t_{0},t_{0}+t_{1}]$.
An event $A$ is restricted to the space-time box $B = B_{s} \times [t_{0}, t_{0}+t] \subset \Z^{d} \times \R$ if it depends only on the particles that are in $B_{s}$ at time $t_{0}$ and their trajectories during the time interval $[t_{0}, t_{0}+t]$.

The space of space-time configurations can be endowed with a partial order via $\xi \preceq \tilde{\xi}$ if $\xi_{t}(x) \leq \tilde{\xi}_{t}(x)$, for all $(x,t) \in \Z^{d} \times \R_{+}$. An event $A$ is said \emph{increasing} if $\xi \preceq \tilde{\xi}$ implies $\textbf{1}_{A}(\xi) \leq \textbf{1}_{A}(\tilde{\xi})$.

\begin{defn}
   For a density $\rho>0$ and an increasing event $E$ restricted to the space-time box $X \times [0,s]$, the \emph{probability associated} to $E$ is the probability $\nu_{E}(\rho, X, X', s)$ that 
   $E$ holds given that, at time $0$, the particles in $X$ have density $\rho$ (meaning that each site in $X$ has a number of particles that is independently distributed according to a $\poisson(\rho)$ distribution) and displacement in $X'$ from time $0$ to $s$.
\end{defn}

Let $E$ denote an increasing event that is restricted to the super cell $(0,0)$. For each $(i,\tau) \in \Z^{d+1}$, let $E(i,\tau)$ denote the translation of the event $E$ to the super cell $(i, \tau)$.

\bigskip

\noindent\textbf{The base-height index.} The base-height index is a different parametrization of space-time boxes. We fix one of the $d$ spatial dimensions and denote this dimension as the \emph{height}, by using the special notation $h$ for it. The other $d-1$ spatial dimensions and the temporal dimension form the \emph{base} of our new coordinate system, and are jointly denoted by $b \in \Z^{d}$.

This new notation provides a bijection $\varphi$ between space-time coordinates $(i, \tau)$ and base-height indices $(b,h)$. The \emph{base-height cell} $(b,h)$ corresponds to the cell $(i,\tau) = \varphi^{-1}(b,h)$. Analogous definitions hold for the \emph{base-height super cell}. Furthermore, if $E(i, \tau)$ denotes an event supported in the super cell $(i, \tau)$, we write $E_{\textnormal{bh}}(b,h)$ for the analogous event in base-height notation.

\bigskip

\noindent\textbf{Two-sided Lipschitz surfaces.} A function $F:\Z^{d} \to \Z$ is Lipschitz if
\begin{equation}
|F(x)-F(y)| \leq ||x-y||_{1}, \text{ for all } x, y \in \Z^{d}.
\end{equation}

\begin{defn}\label{def:lipschitz_surface}
   A \emph{two-sided Lipschitz surface} $\mathcal{S}$ is a collection of base-height cells
   \begin{equation}
      \mathcal{S} = \{(b, F_{+}(b)), (b,-F_{-}(b)): b \in \Z^{d}\},
   \end{equation}
   where $F_{+}, F_{-}: \Z^{d} \to \Z_+$ are non-negative Lipschitz functions.
\end{defn}

\newconstant{c:ls1}
\newconstant{c:ls2}
For $r \geq 0$, denote by $Q_{r}$ the $d$-dimensional cube $[-r/2, r/2]^{d}$. The following theorem is the main result of~\cite{gs2}.
\begin{teo}[The Lipschitz surface]\label{t:lipschitz}
   There exist positive constants $\useconstant{c:ls1}$ and $\useconstant{c:ls2}$ such that the following holds. Tessellate $\Z^{d}$ in space-time cells and super cells as described above for some positive $\ell, \beta$, and $\eta$ such that $\beta/\ell^{2}$ is small enough. Let $E(i, \tau)$ be an increasing event that is restricted to the space-time super cell $(i, \tau)$. Fix $\epsilon \in (0,1)$ and $\omega$ such that
   \begin{equation}\label{eq:condition_omega}
   \omega \geq \sqrt{\frac{\eta \beta}{\useconstant{c:ls2}\ell^{2}} \log \Big(\frac{8\useconstant{c:ls1}}{\epsilon}\Big)}.
   \end{equation}
   Then there exists a positive number $\alpha_{0}$ that depends on $\epsilon$, $\eta$ and the ratio $\beta/\ell^{2}$ so that, if
   \begin{equation}
   \min\Big\{\epsilon \rho \ell^{d},\log\Big(\frac{1}{1-\nu_{E}((1-\epsilon)\rho, Q_{(2\eta+1)\ell}, Q_{\omega \ell}, \beta)} \Big) \Big\} \geq \alpha_{0},
   \end{equation}
   a two-sided Lipschitz surface $\mathcal{S}$ where $E(i,\tau)$ holds for all $(i,\tau) \in \mathcal{S}$ exists and surrounds the origin at a finite distance almost surely.
   In addition, when $d \geq 2$, if $\ell$ and $\P[E]$ are sufficiently large, then the portion of the Lipschitz surface with $h=0$ percolates in $\Z^{d}$.
\end{teo}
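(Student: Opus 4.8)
The plan is to renormalise the particle system into an (almost) finite-range percolation of ``good'' cells and then produce the surface by the minimal-surface construction familiar from classical Lipschitz percolation. Declare the cell $(i,\tau)$ \emph{good} if the translated event $E(i,\tau)$ occurs. Granting that good cells are very likely and, after a sprinkling, dominate from below a finite-range field of parameter close to $1$, one builds $\mathcal{S}=\{(b,F_{+}(b)),(b,-F_{-}(b))\}$ by a minimality prescription: $F_{+}$ is, roughly, the lowest Lipschitz graph above $h=0$ all of whose cells can be taken good, and $F_{-}$ the highest such graph below. The overlaps built into the tessellation — the quasi-super and extended boxes — are exactly what guarantee that a good cell at $(b,h)$ always has good neighbours one base-step away with height differing by at most one, so the graphs can be continued while respecting the Lipschitz bound of Definition~\ref{def:lipschitz_surface}.

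First I would prove the single-cell estimate, which uses both hypotheses $\epsilon\rho\ell^{d}\geq\alpha_{0}$ and $\log\big(1/(1-\nu_{E}((1-\epsilon)\rho,Q_{(2\eta+1)\ell},Q_{\omega\ell},\beta))\big)\geq\alpha_{0}$. By the Poisson colouring theorem the particles feeding the super cell $(i,\tau)$ split independently into those whose displacement over the time span of the super cell (of length $\eta\beta$) stays in $Q_{\omega\ell}$ and those that leave it; a Gaussian tail bound for the simple random walk gives that a given particle leaves with probability at most $\exp(-\useconstant{c:ls2}\omega^{2}\ell^{2}/(\eta\beta))$, and \eqref{eq:condition_omega} is precisely the inequality making this at most $\epsilon/(8\useconstant{c:ls1})$. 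A Chernoff bound for the Poisson count of particles, using $\epsilon\rho\ell^{d}$ large, then shows that with overwhelming probability the retained low-displacement particles still have local density at least $(1-\epsilon)\rho$ in the super box $Q_{(2\eta+1)\ell}$; since $E$ is increasing, $E(i,\tau)$ then holds with probability at least $\nu_{E}((1-\epsilon)\rho,Q_{(2\eta+1)\ell},Q_{\omega\ell},\beta)$. Hence a cell is bad with probability at most a constant times $(1-\nu_{E})+e^{-c\,\epsilon\rho\ell^{d}}$, which the hypotheses make arbitrarily small, and — crucially — good cells whose retained particle clouds are disjoint become independent.

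The main obstacle is upgrading this into a Peierls bound for the surface despite the correlations that the sprinkling does not kill (a single particle still influences a bounded cluster of cells), and here I would invoke the multi-scale scheme of~\cite{gs2}. One introduces a geometrically growing sequence of scales, declares a box at scale $k+1$ bad only if it either contains two well-separated scale-$k$ bad boxes or witnesses a genuinely atypical particle configuration (too many particles, or a particle with anomalously large displacement), and shows inductively that the probability of a scale-$k$ bad box decays fast enough in $k$ to beat the exponential growth of the number of blocking contours. A configuration obstructing $F_{+}$ at height $h$ must contain a connected family of bad cells of base-diameter at least $h$; bounding the number of such lattice animals by $C^{h}$ and the probability of each by $p_{\mathrm{bad}}^{c\,h}$ and summing over $h$, Borel--Cantelli gives that $F_{+}$ and $F_{-}$ are everywhere finite almost surely, so $\mathcal{S}$ is well defined and, in particular, surrounds the origin at finite distance.

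Finally, for the addendum in $d\geq 2$: when $\ell$ and $\P[E]$ are large, the good cells at height $h=0$ (after the same sprinkling buying independence at bounded range) stochastically dominate supercritical Bernoulli site percolation on $\Z^{d}$, so the usual Peierls argument for supercritical percolation yields an infinite cluster of good cells at $h=0$. I expect the genuinely delicate point throughout to be the bookkeeping in the multi-scale step: designing the ``atypical particle'' events at each scale so that they are simultaneously rare enough to close the induction and strong enough that, conditioned on their absence, the surviving particles behave like a bounded-range independent field at that scale — this is what reconciles the long-range correlations carried by moving particles with the contour counting that the surface construction needs.
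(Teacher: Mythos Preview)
The paper does not prove this theorem at all: Theorem~\ref{t:lipschitz} is quoted verbatim as the main result of~\cite{gs2} (Gracar and Stauffer) and is used throughout as a black box. So there is no ``paper's own proof'' to compare your sketch against; the authors explicitly defer to~\cite{gs2}, noting only that the result there, stated for $d\geq 2$, also applies when $d=1$.

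That said, your sketch is a faithful high-level summary of the strategy in~\cite{gs2}: the sprinkling step that trades a fraction $\epsilon$ of the density for a restriction to low-displacement particles (this is exactly where condition~\eqref{eq:condition_omega} enters), the multi-scale renormalisation that converts the long-range correlations carried by moving particles into an effective finite-range dependence, and the Lipschitz-percolation surface construction via a Peierls/contour argument. One small inaccuracy: the surface in~\cite{gs2} is not built as ``the lowest Lipschitz graph all of whose cells are good''; rather, following the Dirr--Dondl--Grimmett--Holroyd--Scheutzow framework, $F_{+}(b)$ is the least height not reachable from infinity by a $d$-path of bad cells, and one then shows that every cell on the resulting graph is good. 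The distinction matters because the minimal good Lipschitz graph need not exist (goodness of individual cells does not by itself propagate along Lipschitz graphs), whereas the blocking-path definition always yields a Lipschitz function once the bad set does not percolate in the appropriate sense.
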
 

\begin{remark}
The theorem above is stated in~\cite{gs2} only for dimensions $d \geq 2$, as their goal was to study high-dimensional models. Their proof, however, also applies for $d=1$. Central to this argument is~\cite[Proposition 6.1]{gs2}, which also holds in dimension $d=1$. In particular, the decay on the probability stated in that proposition follows from~\cite[Lemma 5.6]{gs2}, which is stated in all dimensions.
\end{remark}

\section{Local survival}\label{sec:local_survival}
We now prove Theorem~\ref{t:local_survival}; that is, we show that, 
for any given density $\rho>0$, provided the recovery rate $\lambda>0$ is small enough, 
the origin is visited infinitely often by infected particles on the event where the infection survives.
For this, we will use the Lipschitz Surface argument described in the previous section. 

\subsection{The Lipschitz surface and infections with recovery}
Our first focus is on applying the Lipschitz surface for the infection with recovery. 
We will consider genealogical paths of infection composed of particles that remain infected for a long time, and will introduce an event to apply Theorem~\ref{t:lipschitz}.
We do this in two steps. First, we say that the cell $(i,\tau)$ is \emph{acceptable} if the following two conditions hold on the super cell $(i,\tau)$:
\begin{enumerate}
   \item for every $x$ in the box $i$ such that $\eta_{\tau \beta}(x)>0$, there exists a path (that we call $\gamma^{x}$) that starts at $x$ and that, up to time $T=\tau\beta+\ell^{\frac{5}{3}}$, has no recovery marks and does not exit the extended box $i$;
   
   \item for each $i' \in \Z^{d}$ with $||i'||_{\infty} \leq d+2$, and each $x$ in the box $i$ such that $\eta_{\tau \beta}(x)>0$, the following holds.
      There exists at least one particle that is inside the quasi-super box $i$ at time $\tau \beta$, does not have any recovery marks and is contained in the super cell $(i,\tau)$ up to time $(\tau+1)\beta$, intersects\footnote{We say that two paths intersect if they are at the same site for a positive time interval.} the path $\gamma^{x}$ of the particle that started at $x$ before time $T$, and is, at time $(\tau+1)\beta$ inside the cell $(i+i', \tau+1)$. See Figure~\ref{fig:acceptable}~(b).
\end{enumerate}

See Figure~\ref{fig:acceptable} for a representation of an acceptable cell.

\begin{figure}[h]
\centering
\begin{subfigure}{0.4\textwidth}
\centering
\begin{tikzpicture}[scale=0.8]

\draw[<->](-3, 0) -- (4, 0);
\draw[->](-2.5, 0) -- (-2.5, 3);
\draw[dotted] (-2.5, 2.5) -- (2, 2.5);

\draw (-1, -0.05) -- (-1, 0.05);
\draw (1.5, -0.05) -- (1.5, 0.05);
\draw (2.5, -0.05) -- (2.5, 0.05);
\draw (-2, -0.05) -- (-2, 0.05);
\draw (-2.55, 2.5) -- (-2.45, 2.5);
\draw[line width=2pt, blue!40!gray] (-1,0) -- (1.5,0);

\node[left] at (-2.5 , 2.5) {\small{$T$}};

\node[below] at (-2, 0) {\small{$i\ell-\frac{\ell}{3}$}};
\node[below] at (2.5, 0) {\small{$(i+1)\ell+\frac{\ell}{3}$}};

\draw (0,0) to [out=90, in=200] (1,1) to [out=20, in=270] (2, 2.5);
\node[below] at (1,1) {\small{$\gamma^{x}$}};

\draw (0, -0.05) -- (0, 0.05);
\node[below] at (0, 0) {\small{$x$}};

\end{tikzpicture}
\caption{}
\end{subfigure}
\hfill
\begin{subfigure}{0.59\textwidth}
\centering
\begin{tikzpicture}[scale=0.8]

\draw[<->](-3, 0) -- (5, 0);
\draw[->](-2.5, 0) -- (-2.5, 4);
\draw[dotted] (-2.5, 2.5) -- (2, 2.5);
\draw[dotted] (-2.5, 3.5) -- (2.5, 3.5);

\draw (-1, -0.05) -- (-1, 0.05);
\draw (1.5, -0.05) -- (1.5, 0.05);
\draw (4, -0.05) -- (4, 0.05);
\draw (-2.55, 3.5) -- (-2.45, 3.5);
\draw (-2.55, 2.5) -- (-2.45, 2.5);
\draw[line width=2pt, blue!40!gray] (-1,0) -- (1.5,0);
\draw[line width=2pt, red!40!gray] (1.5,0) -- (4,0);

\node[left] at (-2.5 , 2.5) {\small{$T$}};
\node[left] at (-2.5 , 3.5) {\small{$\tau(\beta+1)$}};

\draw (0, -0.05) -- (0, 0.05);
\node[below] at (0, 0) {\small{$x$}};

\draw (-1, -0.05) -- (-1, 0.05);
\node[below] at (-1, 0) {\small{$i\ell$}};
\draw (1.5, -0.05) -- (1.5, 0.05);
\node[below] at (1.5, 0) {\small{$(i+1)\ell$}};
\draw (4, -0.05) -- (4, 0.05);
\node[below] at (4, 0) {\small{$(i+2)\ell$}};

\draw (0,0) to [out=90, in=200] (1,1) to [out=20, in=270] (2, 2.5);
\draw (1,0) to [out=90, in=200] (1,1.7) to [out=20, in=270] (3, 3) to [out=90, in=350] (2.5, 3.5);

\end{tikzpicture}
\caption{}
\end{subfigure}
\caption{The two conditions for an acceptable cell (in $d=1$). In (a), a representation of the first requirement can be seen: for every $x$ in the blue box that is occupied by time $\tau\beta$, there exists a path $\gamma^{x}$ with the aforementioned properties. On the left, the second requirement: regardless from where the infection starts in the blue box, the infection spreads to the top of nearby boxes by time $\tau(\beta+1)$.}
\label{fig:acceptable}
\end{figure}
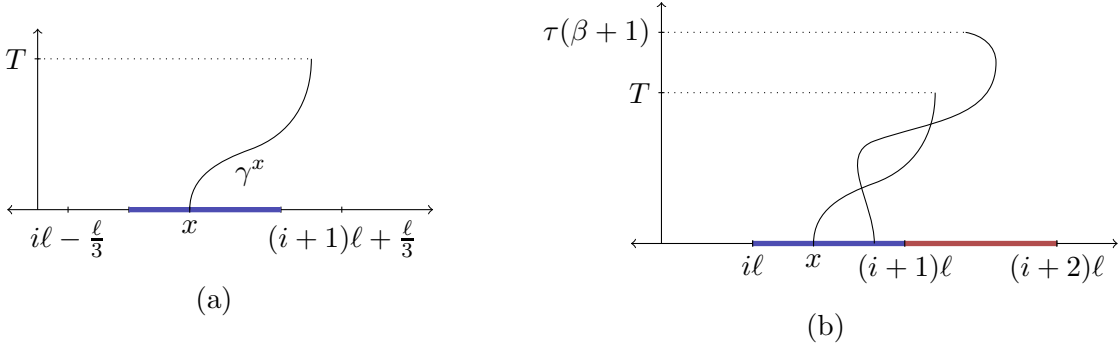

\newconstant{c:infection_spread}

The probability for the above event, but without recovery marks (thus, for the case $\lambda=0$), was derived in~\cite[Lemma 4]{gs1}. They prove that there exists a positive constant $C$ such that
\begin{equation}
\P_{\frac{\rho}{2}}^{0}[(i,\tau) \text{ is acceptable}] \geq 1-e^{-C\rho \ell^{\frac{1}{3}}}.
\end{equation}
We will use their result with a thinning argument where only particles that do not have recovery marks between times $\tau \beta$ and $(\tau+1)\beta$ are considered. 
\begin{lemma}\label{lemma:trigger_lipschitz_surface_1}
   Fix the ratio $\beta/\ell^{2}$. There exists a positive constant $\useconstant{c:infection_spread}>0$ such that, for all positive values of $\rho$ and $\lambda$ and all $\ell$ sufficiently large,
   \begin{equation}
   \P_{\frac{\rho}{2}}^{\lambda}[(i,\tau) \text{ is acceptable}] \geq 1-e^{-\useconstant{c:infection_spread}\rho e^{-\lambda \beta} \ell^{\frac{1}{3}}}.
   \end{equation}
\end{lemma}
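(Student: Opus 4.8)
The strategy is to reduce the claim to the corresponding $\lambda = 0$ estimate from~\cite{gs1} via a thinning argument. The key observation is that the two conditions defining ``acceptable'' differ from the analogous $\lambda=0$ event only through the requirement that certain particles carry \emph{no recovery marks} during the relevant time window $[\tau\beta, T]$ (respectively $[\tau\beta, (\tau+1)\beta]$). Since the recovery marks $(R^{x,n}_\lambda)$ form an independent family of Poisson processes of rate $\lambda$ on $\R_+$, the probability that a given particle has no recovery mark during a time interval of length at most $\ell^{5/3}$ is at least $e^{-\lambda \ell^{5/3}}$; but for our purposes it is cleaner to ask for no recovery mark on the whole super interval, whose length is $\eta\beta$. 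Actually, the statement we want has $e^{-\lambda\beta}$ in the exponent, so the right move is: thin the initial configuration by keeping each particle \emph{only if} it has no recovery mark in $[\tau\beta, (\tau+1)\beta]$, which happens independently with probability $e^{-\lambda\beta}$, and then observe that the surviving particles still form a Poisson field, now with density $\tfrac{\rho}{2} e^{-\lambda\beta}$ by the thinning property of Poisson processes (since the recovery marks are independent of the walk paths and of $\eta_0$). On this thinned configuration the recovery mechanism is irrelevant, so the event that the two acceptability conditions hold \emph{using only thinned particles} is an event about the $\lambda = 0$ process at density $\tfrac{\rho}{2}e^{-\lambda\beta}$.

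\textbf{Carrying it out.} First I would recall (or restate) the $\lambda=0$ estimate from~\cite{gs1}: there the analogous event — each occupied site in box $i$ admits a genealogical path staying in the extended box up to time $T$, and for each nearby target cell there is a particle linking to that path and landing in the target — holds on the super cell with probability at least $1 - e^{-c \rho \ell^{1/3}}$ for $\ell$ large, uniformly over densities bounded below. (The $\ell^{5/3}$ time horizon and $\ell^{1/3}$ in the exponent are exactly the scaling used in~\cite{gs1,gs2}; the $5/3$ exponent is what makes the random-walk displacement estimates work out with a stretched-exponential bound.) Second, I would perform the thinning: let $\tilde\eta_0$ be the sub-configuration of particles present at time $\tau\beta$ that carry no recovery mark on $[\tau\beta,(\tau+1)\beta]$. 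By the Poisson thinning property and independence of the recovery clocks, under $\P^\lambda_{\rho/2}$ the field $\tilde\eta_{\tau\beta}$ (restricted to the relevant box) is Poisson with density $\tfrac{\rho}{2}e^{-\lambda\beta}$. Note $(\tau+1)\beta \le T$, so any thinned particle is in particular recovery-free on all of $[\tau\beta, T]$ as well, which is what condition~(1) needs for the paths $\gamma^x$ — wait, one must be slightly careful: condition (1) asks $\gamma^x$ to be recovery-free up to $T = \tau\beta + \ell^{5/3}$, which is longer than $(\tau+1)\beta$. To handle this cleanly I would instead thin by ``no recovery mark on $[\tau\beta, T]$'', giving density $\tfrac\rho2 e^{-\lambda\ell^{5/3}}$, and then absorb the difference between $e^{-\lambda\ell^{5/3}}$ and $e^{-\lambda\beta}$ — actually these are very different, so the honest route is: the path $\gamma^x$ in~(1) need only be recovery-free, and the $\ell^{1/3}$ savings in the $\lambda=0$ bound of~\cite{gs1} is large enough that multiplying the effective density by $e^{-\lambda\ell^{5/3}}$ still leaves $c\,\rho\, e^{-\lambda\ell^{5/3}}\ell^{1/3}$, which for $\ell$ large... no. The correct reading is that condition~(1)'s horizon $T$ and condition~(2)'s horizon $(\tau+1)\beta$ can be treated separately, and the dominant constraint producing the $e^{-\lambda\beta}\ell^{1/3}$ bound comes from condition~(2); condition~(1) with $\ell^{5/3}$ produces a bound like $e^{-c\rho e^{-\lambda\ell^{5/3}}\ell^{1/3}}$ which is weaker, but since $\beta/\ell^2$ is \emph{fixed} we have $\beta \asymp \ell^2 \gg \ell^{5/3}$, so actually $e^{-\lambda\ell^{5/3}} \ge e^{-\lambda\beta}$ and the condition~(1) bound is the \emph{stronger} one — so the stated $e^{-c\rho e^{-\lambda\beta}\ell^{1/3}}$ is valid as the weaker (hence correct) lower bound. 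Third, I would apply the $\lambda=0$ estimate to the thinned field at density $\tfrac\rho2 e^{-\lambda\beta}$ (using that the thinned field is stochastically below the true thinned field for condition~(1)), obtaining $1 - e^{-c'\, \rho e^{-\lambda\beta}\,\ell^{1/3}}$ for $\ell$ large, and absorb $c'$ and the factor $\tfrac12$ into $\useconstant{c:infection_spread}$.

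\textbf{The main obstacle.} The genuine difficulty is not the thinning itself — which is a one-line consequence of Poisson colouring — but verifying that the acceptability event, \emph{as realised using only the thinned (recovery-free) particles}, is implied by the $\lambda=0$ event of~\cite{gs1} applied to a lower-density Poisson field. One must check that every structural requirement (the path $\gamma^x$ staying in the extended box; the linking particle staying in the super cell, hitting $\gamma^x$ before $T$, and landing in $(i+i',\tau+1)$) refers only to random-walk trajectories and not to the infection/recovery dynamics, so that restricting attention to recovery-free particles genuinely decouples the problem from $\lambda$. The subtle point is condition~(1)'s insistence that a path exists \emph{for every} occupied site $x$ with $\eta_{\tau\beta}(x) > 0$: after thinning, some sites occupied in $\eta_{\tau\beta}$ may have \emph{no} thinned particle, so the event ``$\gamma^x$ exists'' must be read as ``there is a recovery-free particle at $x$ whose trajectory stays in the extended box'', and one needs the $\lambda=0$ result of~\cite{gs1} to be stated (or easily adapted to be stated) for a Poisson field at the thinned density, where automatically every occupied site of that thinned field gets such a path. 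I would state this adaptation carefully and cite the precise proposition in~\cite{gs1}; once that bookkeeping is in place, the rest is the displacement estimate already done there, and the conclusion follows by collecting constants.
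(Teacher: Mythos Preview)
Your approach is the same as the paper's: thin out particles that carry a recovery mark on the relevant time window and then invoke the $\lambda=0$ estimate of~\cite[Lemma~4]{gs1} at the reduced density $\tfrac{\rho}{2}e^{-\lambda\beta}$, absorbing constants. This is exactly what the paper does, and your observation that $\ell^{5/3}\ll\beta$ (so the $e^{-\lambda\beta}$ factor is the limiting one) is correct.

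The one place where the paper proceeds slightly differently is precisely your ``main obstacle''. You try to handle condition~(1) by thinning as well, and then worry---rightly---that a site with $\eta_{\tau\beta}(x)>0$ may lose all its particles under thinning, so the thinned $\lambda=0$ event does not literally imply condition~(1) for \emph{every} originally occupied $x$. Your suggested workaround (reinterpret the event on the thinned field) does not close this, since it changes the condition. The paper sidesteps the issue entirely: it first lets~\cite{gs1} produce the path $\gamma^x$ at each occupied $x$ \emph{without} reference to recovery, and then checks separately and independently that this single chosen path carries no recovery mark on $[\tau\beta,T]$, an event of probability $e^{-\lambda\ell^{5/3}}$ per site. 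A union bound over the at most $\ell^d$ sites in the box gives an additive error term (the $\ell^d e^{-\lambda\ell^{5/3}}$ appearing in the paper's displayed bound, modulo what looks like a typo there), which is then absorbed into the final estimate. This decoupling is the clean way to resolve the subtlety you flagged; once you adopt it, the rest of your argument goes through unchanged.
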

\begin{proof} 
   With probability at least $e^{-\lambda \ell^{\frac{5}{3}}}$ the path $\gamma^x$ has no recovery marks up to time $T=\ell^{\frac{5}{3}}$ for each $x$, whereas a particle has no recovery marks during the interval $[\tau\beta,(\tau+1)\beta]$ with probability $e^{-\lambda \beta}$.
   Using the above and adjusting the constant in~\cite[Lemma 4]{gs1}, we obtain the lower bound $1-e^{-2C\rho e^{-\lambda \beta} \ell^{\frac{1}{3}}}-\ell^{d}e^{-\lambda \ell^{\frac{5}{3}}}$ from which the lemma follows.
\end{proof}

If we obtain a Lipschitz surface composed of boxes that are acceptable, and we know that the infection enters ``from the time dimension'' some box in the Lipschitz surface\footnote{The infection enters a cell $(i,\tau)$ 
from the time dimension if there exists an infected particle in the box $i$ at the initial time of the cell (that is, at time $\tau\beta$).}, 
we can apply Theorem~\ref{t:lipschitz} to conclude that we have local survival for our infection. This is however not a complete proof yet, 
since it may be the case that the infection crosses the Lipschitz surface without touching the bottom part of a cell. In order to take care of this, 
we now define \emph{good cells}, that will take into account this possibility.

For each cell $(i, \tau)$, 
fix an independent realization of a random walk path $(\gamma_{t}^{(i, \tau)})_{t \in [0, \beta]}$, with $\gamma^{(i, \tau)}_{0}=0$. 
We will change slightly the construction of the process so that these paths are used by infected particles when they enter the box through the spatial dimension. 
We say that the cell $(i,\tau)$ is \emph{good} if all the following events hold in  the super-cell $(i,\tau)$:
\begin{enumerate}
   \item the path $\gamma^{(i,\tau)}$ does not have any recovery marks between times $0$ and $\beta$ and, for each interval of time $[t, t+T] \subset [0, \beta)$, where $T= \ell^{\frac{5}{3}}$ and $t \leq \beta-T$, 
   the path $\gamma^{(i,\tau)}|_{[t,t+T]}$ does not have a displacement larger than $\ell/4^{d}$;
   
   \item for every $(x,t)$ in the cell $(i,\tau)$ such that $t-\tau\beta$ is a jump time of $\gamma^{(i,\tau)}$ and $t < (\tau+1)\beta-T$, there exists at least one $i' \in \Z^{d}$ with $||i'||_{\infty} \leq 1$ 
   for which the following holds: there is a particle that is inside the quasi-super box $i$ at time $\tau\beta$, has no recovery marks and is inside the super cell $(i,\tau)$ up to time $(\tau+1)\beta$, 
   intersects the path $\big(x+\gamma^{(i,\tau)}_{s-\tau \beta}-\gamma^{(i,\tau)}_{t-\tau \beta}\big)|_{s \in [t,t+T]}$ by time $t+T$, and is at time $(\tau+1)\beta$ inside the cell $(i+i', \tau+1)$;
   instead, when $t-\tau\beta$ is a jump time of $\gamma^{(i,\tau)}$ but $(\tau+1)\beta-T \leq t \leq (\tau+1)\beta$, 
   we require that $x+\gamma^{(i,\tau)}_{\beta}-\gamma^{(i,\tau)}_{t-\tau \beta}$ belongs to a box $i+i'$ with $\|i'\|_\infty \leq 1$;
   
   \item all cells $(i+i', \tau+1)$, for $||i'||_{\infty} \leq 1$, are acceptable.
\end{enumerate}

Although the event that a cell is good has a larger support than the event where this cell is acceptable (specially because of the third item in the definition), it is still the case that being good is a finitely supported condition. Figure~\ref{fig:good_box} contains a representation of the second requirement in the definition of a good cell.

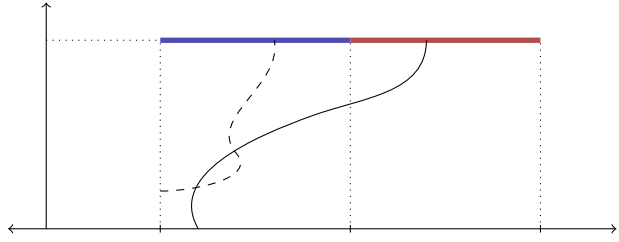
\begin{figure}
\centering
\begin{tikzpicture}

\draw[<->](-3, 0) -- (5, 0);
\draw[->](-2.5, 0) -- (-2.5, 3);
\draw[dotted] (-1, 0) -- (-1, 2.5);
\draw[dotted] (1.5, 0) -- (1.5, 2.5);
\draw[dotted] (4, 0) -- (4, 2.5);
\draw[dotted] (-2.5, 2.5) -- (2, 2.5);
\draw[line width=2pt, blue!40!gray] (-1, 2.5) -- (1.5, 2.5);
\draw[line width=2pt, red!40!gray] (1.5, 2.5) -- (4, 2.5);

\draw (-1, -0.05) -- (-1, 0.05);
\draw (1.5, -0.05) -- (1.5, 0.05);
\draw (4, -0.05) -- (4, 0.05);

\draw[dashed] (-1, 0.5) to [out=0, in=310] (0, 1) to [out=130, in=280] (0.5, 2.5);
\draw (-0.5, 0) to [out=120, in=200] (1, 1.5) to [out=20, in=270] (2.5, 2.5);

\end{tikzpicture}

\caption{An illustration of the second condition in the definition of good cell. The dashed line represents part of the path that is associated to the cell $(i, \tau)$.}
\label{fig:good_box}
\end{figure}

\newconstant{c:infection_spread_2}

\begin{lemma}\label{lemma:trigger_lipschitz_surface_2}
Fix the ratio $\beta/\ell^{2}$ small enough. There exists a constant $\useconstant{c:infection_spread_2}>0$ such that, 
for all positive values of $\rho$ and $\lambda$, and all $\ell$ sufficiently large,
\begin{equation}
\P_{\frac{\rho}{2}}^{\lambda}[(i,\tau) \text{ is good}] \geq 1-(3\ell^{d}\beta+2^{d}+1) e^{-\useconstant{c:infection_spread_2} \min \{1, \rho\} e^{-\lambda \beta} \ell^{\frac{1}{6}}}-2e^{-\min \{ \lambda, 1 \} \beta}.
\end{equation}
\end{lemma}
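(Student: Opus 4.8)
The plan is to bound the probability that a cell fails to be good by a union bound over the three defining conditions, reusing Lemma~\ref{lemma:trigger_lipschitz_surface_1} for condition (3) and handling conditions (1) and (2) directly.

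First I would dispense with condition (3): there are at most $3^d \le 5\cdot 3^{d-1}$ cells $(i+i',\tau+1)$ with $\|i'\|_\infty \le 1$, and each fails to be acceptable with probability at most $e^{-\useconstant{c:infection_spread}\rho e^{-\lambda\beta}\ell^{1/3}}$ by Lemma~\ref{lemma:trigger_lipschitz_surface_1}. Since $\ell^{1/3} \ge \ell^{1/6}$ for $\ell$ large, this contributes a term of the claimed form after absorbing the polynomial factor $3^d$ into the prefactor $5\ell^{2d}(\beta+d+1)$ and adjusting the constant $\useconstant{c:infection_spread_2} \le \useconstant{c:infection_spread}$.

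For condition (1), the path $\gamma^{(i,\tau)}$ having a recovery mark in $[0,\beta]$ has probability $1-e^{-\lambda\beta} \le$ (up to a constant) $\min\{\lambda,1\}\beta \cdot$(something)—more precisely one uses $1-e^{-\lambda\beta}\le 2e^{-\min\{\lambda,1\}\beta}$ is false, so instead I would bound $\P[\text{recovery mark in }[0,\beta]] = 1-e^{-\lambda\beta}$ and fold it into the additive term $2e^{-\min\{\lambda,1\}\beta}$ by noting the complementary event $e^{-\lambda\beta}$ is what we want large; the cleanest route is to observe that the whole lemma is only useful when $\lambda\beta$ is bounded, and otherwise the bound is trivial, so one may freely write $1-e^{-\lambda\beta} \le e^{-\min\{\lambda,1\}\beta}$ after restricting to that regime—this is the kind of bookkeeping I would verify carefully. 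The displacement bound in (1): for a single interval $[t,t+T]$ a continuous-time simple random walk run for time $T=\ell^{5/3}$ exceeds displacement $\ell/4^d$ only with probability at most $e^{-c\ell^2/\ell^{5/3}} = e^{-c\ell^{1/3}}$ by a standard Gaussian/Azuma estimate (since the typical displacement is of order $\sqrt{T}=\ell^{5/6} \ll \ell$); taking a union over the at most $\beta$ integer-indexed sub-intervals gives $\beta e^{-c\ell^{1/3}}$, again of the claimed form.

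For condition (2), which I expect to be the main obstacle, one conditions on the path $\gamma^{(i,\tau)}$ (which has at most, say, $O(\beta)$ jumps with overwhelming probability, and I would add a term for the rare event of more jumps) and on the positions $(x,t)$ of the at most $\eta_{\tau\beta}(\text{box }i) \cdot(\text{number of jumps})$ relevant space-time points; conditionally on the environment being dense (density $\rho/2$ in the quasi-super box), for each such point the required infection-transfer event is exactly an instance of the event analyzed in~\cite[Lemma 4]{gs1}, thinned to particles without recovery marks in $[\tau\beta,(\tau+1)\beta]$ (probability $e^{-\lambda\beta}$ each) and with the target path being the translated-and-shifted copy of $\gamma^{(i,\tau)}$ rather than a fresh walk—here one uses that, by condition (1), this target path itself has controlled displacement, so it stays inside the relevant boxes and the geometric hypotheses of~\cite[Lemma 4]{gs1} are met. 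Each such transfer succeeds with probability at least $1-e^{-c\rho e^{-\lambda\beta}\ell^{1/3}}$, and a union bound over the $O(\ell^d \cdot \beta)$ candidate points $(x,t)$ (at most $\ell^d$ sites times at most $\beta$ jump times) plus the boundary case $t\ge(\tau+1)\beta-T$ (which only needs the displacement control already in (1)) yields $\ell^d\beta\, e^{-c\min\{1,\rho,\lambda\}e^{-\lambda\beta}\ell^{1/6}}$; the appearance of $\ell^{1/6}$ rather than $\ell^{1/3}$, and of the $\min\{1,\rho,\lambda\}$ factor, comes from absorbing the $e^{-\lambda\beta}$ factors and the polynomial losses at small $\lambda$ into a weaker but clean exponent. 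Collecting the three contributions and bounding all polynomial prefactors by $5\ell^{2d}(\beta+d+1)$ gives the stated inequality; the delicate point throughout is ensuring the constant $\useconstant{c:infection_spread_2}$ can be chosen uniformly in $\rho$ and $\lambda$, which is why the $\min\{1,\rho,\lambda\}$ and the reduced exponent $\ell^{1/6}$ are built into the statement.
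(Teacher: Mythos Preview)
Your proposal is correct and mirrors the paper's argument: union-bound over the three conditions, invoke Lemma~\ref{lemma:trigger_lipschitz_surface_1} for~(3), control the displacement and recovery marks of $\gamma^{(i,\tau)}$ for~(1), and for~(2) thin to particles without recovery marks and reduce to the setting of \cite[Lemma~4]{gs1}, followed by a union bound over the $O(\beta\ell^d)$ space--time points $(x,t)$. The only cosmetic difference is that the paper discretizes the continuous parameter $t$ by first bounding the number of jumps of $\gamma^{(i,\tau)}$ by $3\beta$ (failure probability $e^{-\beta}$) and then union-bounding over jump times rather than over integer-indexed intervals; your flagged unease about folding $1-e^{-\lambda\beta}$ into the additive term $2e^{-\min\{\lambda,1\}\beta}$ is legitimate, and the paper's own bookkeeping at that step is equally terse.
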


\begin{proof}
   Without loss of generality, we assume that $(i,\tau)=(0,0)$. We first observe that
   \begin{equation}
   \P[\gamma^{(0,0)} \text{ jumps more than } 3 \beta \text{ times}] \leq e^{-\beta}.
   \end{equation}
   Furthermore, if the number of jumps of $\gamma^{(0,0)}$ is bounded by $3\beta$, the probability that there exists some time $t$ such that $\gamma^{(0,0)}$ travels more than distance $\frac{\ell}{4^{d}}$ in the first $T$ units of time after time $t$ is bounded by $3\beta e^{-c\ell^\frac{1}{6}}$, since one can split this probability by examining only what happens on the times of each of the jumps and use the bound
\begin{equation}
\P[|X_{s}| \geq \alpha, \text{ for some } t \leq T] \leq e^{-\tilde{c}\frac{\alpha}{T^{\sfrac{1}{2}}}},
\end{equation}
valid for simple random walks. Besides, the probability that $\gamma^{(0,0)}$ does not have any recovery mark is exactly $e^{-\lambda \beta}$. From now on we assume that $\gamma^{(0,0)}$ satisfies the first condition of the definition and jumps at most $3\beta$ times.
   
   Suppose now that $(x,t)$ is some point in the cell $(0,0)$ such that $t \leq \beta-T$ is a jump time of $\gamma^{(0,0)}$, and consider the path $\gamma^{(0,0)}$ starting from $x$ at time $t$. We now need to bound the probability that there exists at least one infected particle that enters a cell $(i,1)$, with $||i||_{\infty} \leq 1$. Consider the ``translated'' extended box $x+\big[-\frac{\ell}{3}, \ell+\frac{\ell}{3} \big]^{d}$,  and observe that the density of particles contained inside this extended box that did not leave the super cell $(0,0)$ during the time interval $[0,t]$ can be lower bounded by $\frac{1}{4}\rho$ via a thinning argument. In fact, by taking $\beta/\ell^{2}$ small enough, we can ensure that particles that are inside the extended box $x+\big[-\frac{\ell}{3}, \ell+\frac{\ell}{3} \big]^{d}$  at time $t$ have probability of leaving the super cell $(0,0)$ before time $t$ bounded by $\frac{1}{2}$.
   
  Assume now that the density of particles in the extended box $x+\big[-\frac{\ell}{3}, \ell+\frac{\ell}{3} \big]^{d}$ at time $t$ is $\frac{1}{4}\rho$.
  To conclude, we need to bound the probability that the path $\gamma^{(0,0)}$ starting from $x$ at time $t$ intersects at least one particle that is in the extended box $x+\big[-\frac{\ell}{3}, \ell+\frac{\ell}{3} \big]^{d}$ at time $t$ that has no recovery times up to time $\beta$ and is at a cell $(i,1)$, with $||i||_{\infty} \leq 1$, at time $\beta$.
  In order to do so, one follows the same argument as in~\cite[Lemma 4]{gs1}, but restricted to the extended box $x+\big[-\frac{\ell}{3}, \ell+\frac{\ell}{3} \big]^{d}$, the time interval $[t, \beta]$ and a configuration inside this box at time $t$ with density $\frac{1}{4}\rho$.
  In other words, one needs to show that the number of particles that touch the path $\gamma^{(0,0)}$ between times $t$ and $t+T$ is large, by splitting time into smaller subintervals. 
  After this is done, it remains to deduce that at least one of these particles has the desired properties. This can be done by noticing that the number of such particles stochastically dominates a binomial random variable with appropriate parameters.
  We refrain from pursuing the fine details of the proof, but point out that, via union bound on the possible choices for $(x,t)$, we obtain the bound
   \begin{equation}
   \P_{\frac{\rho}{2}}^{\lambda}\Big[ \begin{array}{c} \text{the box $(0,0)$ does not satisfy} \\ \text{ condition 1 or 2 of a good box} \end{array} \Big] \leq 3\beta\ell^{d}e^{-\useconstant{c:infection_spread}\frac{\rho}{2} e^{-\lambda \beta} \ell^{\frac{1}{3}}} +3\beta e^{-c\ell^\frac{1}{6}}+e^{-\beta}+e^{-\lambda \beta}.
   \end{equation}
  In the equation above, the term $3\beta \ell^{d}$ comes from the union bound accounting for the possibilities of $(x,t)$. The probability multiplying $3 \beta \ell^{d}$ is an immediate application of~\cite[Lemma 4]{gs1} with the appropriate values for the density and sizes of the boxes. The final three terms in the probability above regard the restrictions we imposed in the path $\gamma^{(0,0)}$.
   
   Finally, the probability that all cells $(i,1)$, with $||i||_{\infty} \leq 1$ are acceptable can be bounded using Lemma~\ref{lemma:trigger_lipschitz_surface_1}. The proof is completed by combining all these estimates and simplifying the final expression.
\end{proof}

  The lemmas above are central in the proof of Theorem~\ref{t:local_survival}, but we first need to introduce an alternative construction of the process that will use the additional paths available in each space-time cell (recall these paths were introduced in the definition of good cells).
  This is necessary in order to assure that, when the infection process enters a cell, it spreads to neighboring cells but does not travel very far away.
  We begin by considering the same paths $(S_{t}^{x,n})_{t \geq 0}$ that we used in the first construction of the process, and evolve the infection process with it.
  The difference is that, depending on how the infected particles travel across space-time cells, they might change their paths to the additional one during its trajectory inside a given cell.

  Let us fix a cell $(i,\tau)$ and observe the process inside it.
  If there are infected particles at time $\tau\beta$ inside the box $i$, the path $\gamma^{(i,\tau)}$ is not used.
  Assume this is not the case, i.e., that all particles in the box $i$ are healthy at time $\tau\beta$.
  Observe now the process on the neighboring boxes $i+i'$, with $||i'||_{\infty} \leq 1$, and fix the first infected particle that leaves the extended box of $i+i'$ and enters the box $i$, if it exists.
  Associate to this selected first particle the path $\gamma^{(i, \tau)}$.
  The distinguished particle now follows this path until time $(\tau+1) \beta$, that is, if this first infected particle that enters the cell $(i, \tau)$ does so at time $s > \tau \beta$, it follows the trajectory given by $\gamma^{(i, \tau)}_{t-\tau \beta}$ for all time $t$ in the interval $(s, (\tau+1)\beta)$.
  Notice however that this particle may not use the path $\gamma^{(i,\tau)}$ up to time $(\tau+1)\beta$ if it happens that it leaves the super cell $(i,\tau)$ and enters a space-time box that has not yet seen an infected particle as described above.
  In this case, this particle changes the followed path to the one associated to the next box it enters.
  Since the tessellation into space-time cells is fixed, this new construction preserves the distribution of the process.
  
  We now proceed with the proof of Theorem~\ref{t:local_survival}. Notice that it suffices to conclude that~\eqref{eq:local_survival} holds for the graphical construction introduced above.
\begin{proof}[Proof of Theorem~\ref{t:local_survival}]
   We begin by setting the stage in order to apply Theorem~\ref{t:lipschitz}. Consider the event $E$ restricted to the super cell $(i,\tau)$ defined by
   \begin{equation}
   E(i, \tau) = \big\{\text{all cells } (i+i',\tau+\tau'), \text{ with } ||(i',\tau')||_{\infty} \leq 1, \text{ are acceptable and good} \big\}.
   \end{equation}
Notice that, straightforward from the definition of good and acceptable cells, we obtain that the event $E(i, \tau)$ is restricted to the super cell $(i,\tau)$ with $\eta= 4d+4$.
   If $(i,\tau) = (0,0)$, we write simply $E=E(0,0)$. Set $\epsilon = \frac{1}{2}$ and fix the ratio $\beta/\ell^{2}$ small enough in a way that~\eqref{eq:condition_omega} is verified for $\omega = 2\eta+1 = 8d+9$. Assume furthermore that $\ell$ is large enough so that Lemmas~\ref{lemma:trigger_lipschitz_surface_1} and~\ref{lemma:trigger_lipschitz_surface_2} hold. By possibly increasing the value of $\ell$ and choosing $\lambda$ sufficiently small (depending on $\ell$), we obtain
   $
   \P^{\lambda}_{\frac{\rho}{2}}[E^{c}] \leq e^{-\alpha_{0}},
   $
   which implies
   \begin{equation}
      \log\Big(\frac{1}{1-\nu_{E}((1-\epsilon)\rho, Q_{(2\eta+1)\ell}, Q_{\omega \ell}, \beta)} \Big) 
      = \log\Big(\frac{1}{1-\P_{\frac{\rho}{2}}^{\lambda}[E]} \Big) \geq \alpha_{0}.
   \end{equation}
   Finally, if $\ell$ is further increased, we can ensure that $\epsilon \rho \ell^{d} = \frac{1}{2}\rho \ell^{d} \geq \alpha_{0}$. 
   This verifies the hypotheses from Theorem~\ref{t:lipschitz} and implies the existence of a Lipschitz surface $\mathcal{S}$ of cells $(i, \tau)$ for which the event $E(i, \tau)$ holds.

  The Lipschitz surface we obtain surrounds the origin almost surely (in the sense of Theorem~\ref{t:lipschitz}). Furthermore, there are infinitely many cells that belong to the Lipschitz surface with all spatial coordinates $0$.
  If any of these cells is visited by infected particles, then an infected particle comes within distance $\ell$ of the origin in that cell, and has a positive probability of visiting the origin before recovering.
  To conclude the theorem then, it suffices to verify that infected particles visit these cells infinitely many times.

  Assume that the infection survives forever, let $\mathcal{C}$ denote the portion of the Lipschitz surface $\mathcal{S}$ surrounding the origin.
  Denote by $\mathcal{C}^{\circ}$ the collection of extended cells that are contained in the bounded region limited by $\mathcal{C}$ and connected to $(0,0)$.
  Since the infection survives forever, it must exit the set $\mathcal{C}^{\circ}$.
  In particular, either there exists a cell $(i, \tau)$ that is either in $\mathcal{C}$ or neighbors $\mathcal{C}$ such that the infection enters through the base, or there exists one cell $(i,\tau)$ in $\mathcal{C}$ such that the distinguished path $\gamma^{(i,\tau)}$ is used.
  Our goal is to prove that the infection always enters some cell in the surface through its base, and thus can spread to neighboring boxes, since all cells in the surface are acceptable.

  We split the discussion into two cases.
  Assume first that the infection enters a cell $(i, \tau)$ in $\mathcal{C}$ or that neighbors $\mathcal{C}$ through the bottom.
  In this case, since the boxes in $\mathcal{C}$ and neighboring boxes are acceptable, the infection spreads to the cells $(i+i',\tau+1)$, for all $i'$ such that $||i'||_{\infty} \leq 2$.
  In particular, the infection enters a box of the Lipschitz surface through its bottom and we are done. 
  
  Consider now the case when the infection enters a box $(i,\tau)$ through its side and that the path $\gamma^{(i,\tau)}$ is used.
  Since $(i,\tau)$ and all its neighboring cells are also good, the infection (that enters through the side and then changes path) spreads to at least one cell $(i+i', \tau+1)$, with $||i'||_{\infty} \leq 1$, which is also acceptable.
  Here we use the fact that the path $\gamma^{(i,\tau)}$ has displacement at most $\frac{\ell}{4^{d}}$ between the time $t$ it enters the cell and $t+T$, and that the same happens with all the paths for the neighboring boxes, implying that the infected particle can only change paths between the paths of the cell $(i,\tau)$ or its neighbors.
  Notice that the cell $(i+i', \tau+1)$ is not necessarily part of the surface $\mathcal{S}$.
  However, since it is acceptable, it spreads the infection to the cells $(i+i'+i'', \tau+2)$, for all $i''$ such that $||i''||_{\infty} \leq d+2$.
  In particular, there exists a cell in the Lipschitz surface whose bottom part is reached by the infection.

By construction, the infection reaches every cell in the Lipschitz surface connected to this entry cell by a path with growing time coordinates.
  We now need to argue that, for any cell $(b,h)$ in $\mathcal{S}$, there exists a path of cells in the Lipschitz surface that starts in $(b,h)$, goes forward in time, and reaches infinitely many cells with spatial coordinates zero.
  This is clear in dimension one, since the Lipschitz surface intersects height-zero cells infinitely many times.
  For dimensions $d \geq 2$ it suffices to argue that we eventually reach a box with all but one base coordinate equal to $0$.
  From this, we reduce the analysis to the $d=1$ case.
   
   Fix then $(b,h) \in \mathcal{S}$. Denote by $b_{\text{s}}$ and $b_{\text{t}}$ the spatial and temporal coordinates of $b \in \Z^{d}$, respectively. Let $b_{\text{s}} =b_{0} \sim b_{1} \sim \dots \sim b_{n} = 0$ be a nearest-neighbor path connecting $b_{\text{s}}$ to $0$ in $\Z^{d-1}$. Consider the path $\bold{b}_{i} = (b_i, b_{\text{t}}+i)$, for $i \in \{0, \dots, n\}$. And notice that $||\bold{b}_{i}-\bold{b}_{i-1}||_{\infty} \leq 2$. If we follow the Lipschitz surface from $(b, h)$ through the path $\bold{b}_{i}$, we eventually reach a cell with base coordinates $(0,b_{\text{t}}+n) \in \Z^{d-1} \times \Z$. This then reduces the problem to dimension $d=1$ and concludes the proof.
\end{proof}

\section{Absence of phase transition for large densities}\label{sec:survival}
~
In this section we present the proof of Theorem~\ref{t:survival}. This will also be based on the Lipschitz surface, but we will consider a slightly different event in this case.

Here, our approach is simpler, but we need to introduce some additional notation. For $u \in \mathcal{U} = \{+e_{i}, -e_{i}: i \leq d\}$ and some pair $(x,k)$, denote by $N_{u}^{k}(x)$ the amount of particles that are at $x$ at time $k$ and, before time $k+1$, jump to the site $x+u$. Also, write $N_{0}^{k}(x)$ for the number of particles that remain in $x$ between times $k$ and $k+1$.

Assume that our process has initial density $\rho$. Via thinning we obtain that, for each fixed pair $(x,k)$, the variables $N_{u}^{k}(x)$, $u \in  \mathcal{U} \cup \{0\}$, are independent and
\begin{equation}\label{eq:distribution_N}
\begin{split}
N_{u}^{k}(x) & \sim \poisson\Big( \frac{1-e^{-1}}{2d}\rho \Big) \qquad \text{for } u \neq 0, \\
N_{0}^{k}(x) & \sim \poisson\big( e^{-1}\rho \big).
\end{split}
\end{equation}

We say that a space-time point $(x,k)$ is \emph{good} if
\begin{equation}
N_{u}^{k}(x) \geq 1, \text{ for all } u \in \mathcal{U}, \text{ and } N_{0}^{k}(x) \geq 2,
\end{equation}
and define the increasing event $\tilde{E}$ restricted to the super cell $(0,0)$ as
\begin{equation}
\tilde{E} = \Big\{\text{all points } (x,k) \in \Z^{d} \times \Z \text{ in the super cell } (0,0) \text{ are good} \Big\}.
\end{equation}

Notice that, on the event $\tilde{E}$, whenever the infection with instantaneous recovery enters the cell $(0,0)$, it spreads to the neighboring cells. In particular, if there exists a Lipschitz surface such that the event $\tilde{E}$ holds in each cell of it, then the infection survives forever once it enters the surface.
The next lemma will help us to apply Theorem~\ref{t:lipschitz}.
\begin{lemma}
For all choices of $\rho$, $\beta$, $\ell$, and $\eta$, we have
\begin{equation}
\P_{\rho}^{\infty}[E^{c}] \leq \big((2\eta+1)\ell\big)^{d}\eta \beta \Big( 1-\big(1-e^{-\rho e^{-1}} - \rho e^{-1}e^{-\rho e^{-1}} \big) \big(1-e^{-\frac{1-e^{-1}}{2d}\rho} \big)^{2d} \Big)
\end{equation}
\end{lemma}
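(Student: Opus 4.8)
The plan is a direct union bound over the space-time lattice points in the super cell, reducing everything to the single-point computation afforded by the explicit Poisson distributions in~\eqref{eq:distribution_N}. (Throughout I read the event in the statement as $\tilde E$, defined just above the lemma; the displayed $E^c$ is its complement $\tilde E^c$.)

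First I would fix the super cell $(0,0)$, which consists of the space-time points $(x,k)$ with $x$ ranging over the super box $[-\eta\ell,(\eta+1)\ell]^{d}$ and $k$ ranging over $\{0,1,\dots,\eta\beta-1\}$; there are at most $\big((2\eta+1)\ell\big)^{d}$ choices for the spatial coordinate and exactly $\eta\beta$ choices for the temporal one. Since $\tilde E$ is precisely the event that every such point is good, a union bound combined with translation invariance of $\P_{\rho}^{\infty}$ gives
\begin{equation*}
\P_{\rho}^{\infty}[\tilde E^{c}] \;\le\; \sum_{(x,k)\text{ in }(0,0)} \P_{\rho}^{\infty}\big[(x,k)\text{ is not good}\big] \;\le\; \big((2\eta+1)\ell\big)^{d}\,\eta\beta\;\P_{\rho}^{\infty}\big[(0,0)\text{ is not good}\big].
\end{equation*}
Note that independence of the events $\{(x,k)\text{ good}\}$ across distinct $(x,k)$ is not needed here — only the marginals enter the union bound.

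Next I would evaluate the single-point probability. By~\eqref{eq:distribution_N}, for the fixed pair $(0,0)$ the variables $N_{u}^{0}(0)$, $u\in\mathcal U\cup\{0\}$, are independent, with $N_{0}^{0}(0)\sim\poisson(e^{-1}\rho)$ and $N_{u}^{0}(0)\sim\poisson\big(\tfrac{1-e^{-1}}{2d}\rho\big)$ for each of the $2d$ directions $u\in\mathcal U$. Using $\P[\poisson(\mu)\ge 2]=1-e^{-\mu}-\mu e^{-\mu}$ and $\P[\poisson(\mu)\ge 1]=1-e^{-\mu}$, the definition of a good point yields
\begin{equation*}
\P_{\rho}^{\infty}\big[(0,0)\text{ is good}\big] = \P[N_{0}^{0}(0)\ge 2]\prod_{u\in\mathcal U}\P[N_{u}^{0}(0)\ge 1] = \big(1-e^{-\rho e^{-1}}-\rho e^{-1}e^{-\rho e^{-1}}\big)\big(1-e^{-\frac{1-e^{-1}}{2d}\rho}\big)^{2d}.
\end{equation*}
Taking the complement and substituting into the union bound from the previous step produces exactly the claimed inequality.

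There is no serious obstacle: the argument is a bookkeeping exercise once~\eqref{eq:distribution_N} is available, and that identity is itself a routine thinning of the Poisson particle field along the outgoing edges of each site. The only minor points to watch are (i) that the number of lattice sites in $[-\eta\ell,(\eta+1)\ell]^{d}$ is bounded by $\big((2\eta+1)\ell\big)^{d}$ under the convention on boxes used in the tessellation, and (ii) keeping track of the cardinality $|\mathcal U|=2d$ so that the product carries the exponent $2d$.
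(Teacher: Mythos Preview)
Your proposal is correct and follows exactly the paper's own argument: compute $\P_{\rho}^{\infty}[(0,0)\text{ is good}]$ from the Poisson distributions in~\eqref{eq:distribution_N}, then apply a union bound over the at most $\big((2\eta+1)\ell\big)^{d}\eta\beta$ space-time points in the super cell. The only cosmetic difference is that you state the union bound first and the single-point computation second, whereas the paper does them in the opposite order.
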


\begin{proof}
Note that, due to the independence of the variables $N_{u}^{k}(x)$ and~\eqref{eq:distribution_N}, we have
\begin{equation}
\P_{\rho}^{\infty}[\, \text{the point } (0,0) \text{ is good} \,] = \big(1-e^{-\rho e^{-1}} - \rho e^{-1}e^{-\rho e^{-1}} \big) \big(1-e^{-\frac{1-e^{-1}}{2d}\rho} \big)^{2d}.
\end{equation}
We now apply union bound to obtain
\begin{equation}
\begin{split}
\P_{\rho}^{\infty}[E^{c}] & = \P_{\rho}^{\infty}\Big[\begin{array}{c}\text{there exists a point } (x,k) \\ \text{in the super cell } (0,0) \text{ that is not good} \end{array} \Big] \\
& \leq \big((2\eta+1)\ell\big)^{d}\eta \beta \big( 1-\P_{\rho}^{\infty}[\, \text{the point } (0,0) \text{ is good} \,] \big) \\
& = \big((2\eta+1)\ell\big)^{d}\eta \beta \Big( 1-\Big(1-e^{-\rho e^{-1}} - \rho e^{-1}e^{-\rho e^{-1}} \big) \big(1-e^{-\frac{1-e^{-1}}{2d}\rho} \big)^{2d} \Big),
\end{split}
\end{equation}
concluding the lemma.
\end{proof}

With this lemma in hand, we can conclude the proof of Theorem~\ref{t:survival}.
\begin{proof}[Proof of Theorem~\ref{t:survival}]
As in the proof of Theorem~\ref{t:local_survival}, we first frame our problem in a way as to apply Theorem~\ref{t:lipschitz}. Fix the ratio $\beta/\ell^{2}$ small enough. Choose $\eta=1$, $\epsilon=\frac{1}{2}$, and fix $\beta$ and $\ell$ such that $\beta > \ell$ and~\eqref{eq:condition_omega} is verified for $\omega=3$.

Choose now $\rho_{+}$ large enough such that, for all $\rho \geq \rho_{+}$,
\begin{equation}
3^{d} \ell^{d} \beta \Big( 1-\Big(1-e^{-\frac{\rho}{2} e^{-1}} - \frac{1}{2}\rho e^{-1}e^{-\frac{\rho}{2} e^{-1}} \Big) \big(1-e^{-\frac{1-e^{-1}}{4d}\rho} \big)^{2d} \Big) \leq e^{-\alpha_{0}},
\end{equation}
where $\alpha_{0}$ is given by Theorem~\ref{t:lipschitz}, and observe that this implies that
\begin{equation}
\begin{split}
\log\Big(\frac{1}{1-\nu_{\tilde{E}}((1-\epsilon)\rho, Q_{(2\eta+1)\ell}, Q_{\omega \ell}, \beta)} \Big) & = \log\Big(\frac{1}{1-\nu_{\tilde{E}}(\frac{1}{2}\rho, Q_{3\ell}, Q_{3\ell}, \beta)} \Big) \\
& = \log\Big(\frac{1}{1-\P_{\frac{\rho}{2}}^{\infty}[\tilde{E}]} \Big) \geq \alpha_{0}.
\end{split}
\end{equation}

By further increasing the value of $\rho_{+}$, we can assume that $\epsilon\rho \ell^{d} \geq \frac{1}{2}\rho_{+} \ell^{d} \geq \alpha_{0}$. The hypotheses of Theorem~\ref{t:lipschitz} are thus verified and we obtain the existence of a Lipschitz surface of base-height cells $(b,h)$ such that $\tilde{E}_{\text{bh}}(b,h)$ holds.

This surface surrounds the origin and implies that the infection process with instantaneous recovery has a positive probability of not dying out, for every $\rho \geq \rho_{+}$. This concludes the proof.
\end{proof}

\bibliographystyle{plain}
\bibliography{mybib}

\end{document}